\newcommand\reallywidehat[1]{%
\savestack{\tmpbox}{\stretchto{%
  \scaleto{%
    \scalerel*[\widthof{\ensuremath{#1}}]{\kern-.6pt\bigwedge\kern-.6pt}%
    {\rule[-\textheight/2]{1ex}{\textheight}}
  }{\textheight}%
}{0.5ex}}%
\stackon[1pt]{#1}{\tmpbox}%
}
\newcommand{\calD}{\mathcal{D}}
\newcommand{\calS}{\mathcal{S}}
\newcommand{\mC}{\mathbb{C}}
\newcommand{\mD}{\mathbb{D}}
\newcommand{\mN}{\mathbb{N}}
\newcommand{\mR}{\mathbb{R}}
\newcommand{\mZ}{\mathbb{Z}}
\newcommand{\bbe}{\bm{e}}
\newcommand{\bbk}{\bm{k}}
\newcommand{\bbm}{\bm{m}}
\newcommand{\bbn}{\bm{n}}
\newcommand{\bbv}{\bm{v}}
\newcommand{\bby}{\bm{y}}
\newtheorem{theorem}{Theorem}[section]
\newtheorem{proposition}[theorem]{Proposition}
\theoremstyle{definition}
\newcommand{\nm}{\,\rule[-.6ex]{.13em}{2.3ex}\,}
\theoremstyle{definition}
\newtheorem{definition}[theorem]{Definition}
\theoremstyle{definition}
\theoremstyle{definition}
\newtheorem{example}[theorem]{Example}
\begin{document}

\keywords{Krull dimension, prime ideals, maximal ideals, ring of periodic distributions, convolution, sequences of at most polynomial growth}

\subjclass[2010]{Primary 54C40; Secondary 13A15, 15A24, 13J99}

 \title[]{Ideals in the convolution algebra of periodic distributions}
 
 \author[]{Amol Sasane}
 \address{Department of Mathematics \\London School of Economics\\
     Houghton Street\\ London WC2A 2AE\\ United Kingdom}
 \email{A.J.Sasane@lse.ac.uk}
 
 \maketitle
 
 \begin{abstract} 
 The ring of periodic distributions on $\mR^{\tt d}$ with usual addition and with convolution is considered. Via Fourier series expansions, this ring is isomorphic to the ring $\calS'(\mZ^{\tt d})$ of all maps $f:\mZ^{\tt d}\rightarrow \mC$ of at most polynomial growth (i.e., there exist a real $M>0$ and a nonnegative integer ${\tt m}$ such that for all $\bbn=({\tt n}_1,\cdots, {\tt n}_{\tt d})\in \mZ^{\tt d}$, $ |f(\bbn)|\leq M(1+|{\tt n}_1|+\cdots+|{\tt n}_{\tt d}|)^{\tt m}$), with pointwise operations. 
 It is shown that finitely generated ideals in $\calS'(\mZ^{\tt d})$ are principal, and  ideal membership is 
  characterised analytically. Calling an ideal in $\calS'(\mZ^{\tt d})$ fixed if there is a common index $\bbn \in \mZ^{\tt d}$ where each member vanishes, the fixed maximal ideals are described, and it is shown that not all maximal ideals are fixed. It is shown that finitely generated proper prime ideals in $\calS'(\mZ^{\tt d})$ are fixed maximal ideals. The Krull dimension of $\calS'(\mZ^{\tt d})$ is proved to be infinite, while the weak Krull dimension is shown to be equal to $1$. 
 \end{abstract}
  
\section{Introduction}

\noindent The aim of this article is to study ideals in a naturally arising ring 
in harmonic analysis and distribution theory, namely the ring $\calD'_{\mathbf{V}}(\mR^{\tt d})$ of
periodic distributions with the usual  addition $+$ distributions, and with convolution $\ast$ taken as multiplication. Via a Fourier series
expansion, the ring $(\calD'_{\mathbf{V}}(\mR^{\tt d}),+,\ast)$  is isomorphic to
the ring $\calS'(\mZ^{\tt d})$ consisting of all $f:\mZ^{\tt d} \rightarrow \mC$ of at most polynomial growth,
with pointwise operations, and we recall this below. 

\subsection{The ring  of periodic
  distributions.}
\label{subsec_2}

\noindent For background on periodic distributions and its Fourier
series theory, we refer the reader to \cite[Chapter~16]{Dui}
and \cite[pp.527-529]{Tre}.

Let $\mN=\{1,2,3,\cdots\}$ be the set of natural numbers and $\mZ$ be the set of integers. Consider the space $\calS'(\mZ^{\tt d})$ of all complex valued maps on
$\mZ^{\tt d}$ of at most polynomial growth, that is,
$$
\calS'(\mZ^{\tt d}):=\Big\{f: \mZ^{\tt d}\rightarrow \mC\;\Big|\;
\begin{array}{ll}\exists \textrm{ a real }M>0 \; \exists \,{\tt m}\in \mN\cup \{0\} \;\textrm{ such that} \\
\forall \bbn\in \mZ^{\tt d}, \;
|f(\bbn)|\leq M (1+\nm \bbn\nm)^{\tt m}
\end{array}\Big\},
$$
where $\nm \bbn\nm:=|{\tt n}_1|+\cdots+|{\tt n}_{\tt d}|$ for all
$\bbn=({\tt n}_1,\cdots, {\tt n}_{\;\!\tt d})\in \mZ^{\tt d}$. Then $\calS'(\mZ^{\tt d})$ is a unital
commutative ring with pointwise operations, and the multiplicative
unit element $1_{\mZ^{\tt d}}$ is the constant function $ \mZ^{\tt d}\owns \bbn\mapsto 1$.  The set $\calS'(\mZ^{\tt d})$ equipped with pointwise
operations, is a commutative, unital ring.  Moreover,
$(\calS'(\mZ^{\tt d}),+,\cdot)$ is isomorphic as a ring, to the ring
$(\calD'_{\mathbf{V}}(\mR^{\tt d}), +, \ast)$, where
$\calD'_{\mathbf{V}}(\mR^{\tt d})$ is the set of all periodic distributions 
(with periods described by ${\mathbf{V}}$, see the definition below), with the usual pointwise addition of
distributions, and multiplication taken as convolution of
distributions.

Let $\calD(\mR^d)$ denote the space of  compactly supported infinitely many times differentiable complex valued functions on $\mR^{\tt d}$, and $\calD'(\mR^{\tt d})$ the space of distributions on $\mR^{\tt d}$. 
For $\bbv \in {{\mathbb{R}}}^{\tt d}$, the 
{\em translation operator} ${\mathbf{S}}_{{\bbv}}:\calD'(\mR^{\tt d})\rightarrow \calD'(\mR^{\tt d})$,  is defined by 
 $\langle{\mathbf{S}}_{\bbv}(T),\varphi\rangle = \langle
T,\varphi(\cdot+\bbv)\rangle$ for all $\varphi \in
{\mathcal{D}}({\mathbb{R}}^{\tt d})$. 
A distribution $T\in {\mathcal{D}}'({\mathbb{R}}^{\tt d})$ is called {\em
  periodic with a period}
$\bbv\in {\mathbb{R}}^{\tt d}\setminus \{\mathbf{0}\}$ if
 $T= {\mathbf{S}}_{\bbv}(T)$. 
Let $ {\mathbf{V}}:=\{\bbv_1, \cdots, \bbv_{\tt d}\} $ be a
linearly independent set of ${\tt d}$ vectors in ${\mathbb{R}}^{\tt d}$.  Let 
 ${\mathcal{D}}'_{{\mathbf{V}}}({\mathbb{R}}^{\tt d})$ denote the set
of all distributions $T$ that satisfy
 ${\mathbf{S}}_{\bbv_{\tt k}}(T)=T$ for all  ${\tt k}\in \{1,\cdots, {\tt d}\}$. 
From \cite[\S34]{Don}, $T$ is a tempered distribution, and from the
above it follows by taking Fourier transforms that
$ (1-e^{2\pi i \bbv_{\tt k} \cdot \bby})\widehat{T}=0$, for
$ {\tt k}\in \{1,\cdots, {\tt d}\}$, $\bby \in \mR^d$.  Then 
$
\widehat{T}
=
\sum_{\bbv  \in V^{-1} {\mathbb{Z}}^{\tt d}} 
\alpha_{\bbv}(T) \delta_{\bbv},
$ 
for some scalars $\alpha_{\bbv}(T)\in {\mathbb{C}}$, and where
$V$ is the matrix with its rows equal to the transposes of the column
vectors ${\bbv_1, \cdots, \bbv_{\tt d}}$:
$
V^{\textrm{t}}:= \left[ \begin{smallmatrix} 
    \bbv_1& \cdots & \bbv_{\tt d}
    \end{smallmatrix}\right],
$ with $V^{\textrm{t}}$ denoting the transpose of the matrix $V$. 
Also, in the above, $\delta_{\bbv}$ denotes the usual Dirac
measure with support in $\bbv$, i.e.,  
$
\langle \delta_{\bbv},
\varphi\rangle =\varphi (\bbv)$ for all $ \varphi \in
{\mathcal{D}}({\mathbb{R}}^{\tt d})$.  
Then the Fourier coefficients $\alpha_{\bbv}(T)$ give rise to an
element in $\calS'(\mZ^{\tt d})$, and vice versa, every element in
$\calS'(\mZ^{\tt d})$ is the set of Fourier coefficients of some periodic
distribution. In fact, the ring 
$ (\calD'_{\mathbf{V}}(\mR^{\tt d}),+,\ast) $ of periodic distributions on
$\mR^{\tt d}$ is isomorphic  to the ring $ (\calS'(\mZ^{\tt d}),+,\cdot).  $

In  \cite{Sas}, some algebraic-analytical properties of $(\calS'(\mZ^{\tt d}),+,\cdot)$ were established; see also \cite{RoiSas}. In this article, the structure of  ideals  in this ring is studied, akin to an analogous investigation in \cite{vRen} for a ring of entire functions.

\vspace{-0.15cm}

\subsection{Main results and organisation of the article} 

\vspace{-0.15cm}

\begin{itemize}
\item In \S\ref{section_2}, we  show that finitely generated ideals in $\calS'(\mZ^{\tt d})$ are principal, 
and  ideal membership is characterised analytically. 
\item In \S\ref{section_3}, we describe fixed maximal ideals in $\calS'(\mZ^{\tt d})$, and it is shown that not all maximal ideals are fixed. 
\item In \S\ref{section_4}, we show that finitely generated proper prime ideals in $\calS'(\mZ^{\tt d})$ are fixed maximal ideals. Also, the Krull dimension of  $\calS'(\mZ^{\tt d})$ is proved to be infinite, while the weak Krull dimension is shown to be equal to $1$. 
\end{itemize}

\vspace{-0.45cm}

\section{Finitely generated ideals} 
\label{section_2}

\vspace{-0.15cm}

\begin{proposition}
\label{21_3_2023_1018}
$g$ is a divisor of $f$ in $\calS'(\mZ^{\tt d})$ if and only if there exist a real number $M>0$ and a nonnegative integer ${\tt m}$ such that 
for all $\bbn \in \mZ^{\tt d}$, $|f(\bbn)|\leq M(1+\nm \bbn\nm )^{\tt m} |g(\bbn)|$.
\end{proposition}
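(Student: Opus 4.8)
The plan is to prove the two implications separately. The forward direction is immediate from the definition of divisibility, and the reverse direction amounts to exhibiting an explicit quotient and checking it lies in $\calS'(\mZ^{\tt d})$.

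For the forward implication, suppose $g$ divides $f$, so that $f = g\cdot h$ for some $h \in \calS'(\mZ^{\tt d})$. Since $h$ has at most polynomial growth, there exist a real $M>0$ and a nonnegative integer ${\tt m}$ with $|h(\bbn)| \leq M(1 + \nm \bbn\nm)^{\tt m}$ for all $\bbn \in \mZ^{\tt d}$. Multiplying this bound by $|g(\bbn)|$ and using $f(\bbn) = g(\bbn)\,h(\bbn)$ yields $|f(\bbn)| \leq M(1+\nm \bbn\nm)^{\tt m}|g(\bbn)|$ directly, which is the claimed inequality.

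For the reverse implication, assume the inequality holds for some $M$ and ${\tt m}$. I would define the candidate quotient $h$ pointwise by setting $h(\bbn) := f(\bbn)/g(\bbn)$ at each index $\bbn$ where $g(\bbn) \neq 0$, and $h(\bbn) := 0$ otherwise. The key observation, and the only place where any care is needed, is that at every index $\bbn$ with $g(\bbn) = 0$ the hypothesis forces $|f(\bbn)| \leq M(1 + \nm \bbn\nm)^{\tt m}\cdot 0 = 0$, so $f(\bbn) = 0$ there as well. Consequently the identity $g(\bbn)\,h(\bbn) = f(\bbn)$ holds both at the vanishing and at the nonvanishing indices of $g$, giving $f = g\,h$. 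It remains to check that $h$ has at most polynomial growth: where $g(\bbn) \neq 0$, the hypothesis gives $|h(\bbn)| = |f(\bbn)|/|g(\bbn)| \leq M(1+\nm \bbn\nm)^{\tt m}$, and where $g(\bbn) = 0$ we have $h(\bbn) = 0$, so the same bound holds trivially. Hence $h \in \calS'(\mZ^{\tt d})$ and $g$ divides $f$.

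Since both directions reduce to elementary pointwise estimates, I do not expect any substantial obstacle. The single point deserving attention is verifying that the quotient is well defined and of polynomial growth across the zero set of $g$; the hypothesis inequality dispatches this automatically by forcing $f$ to vanish wherever $g$ does, so that assigning $h$ the value $0$ on that set is consistent with $f = g\,h$.
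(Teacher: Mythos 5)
Your proposal is correct and follows essentially the same argument as the paper: the reverse direction defines the quotient pointwise as $f(\bbn)/g(\bbn)$ off the zero set of $g$ and $0$ on it, uses the hypothesis to force $f(\bbn)=0$ wherever $g(\bbn)=0$, and checks polynomial growth; the forward direction multiplies the growth bound for the quotient by $|g(\bbn)|$. No issues.
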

\begin{proof} (`If' part:) Define $d:\mZ^{\tt d}\rightarrow \mC$ by 
$$
d(\bbn)=
\left\{\begin{array}{cl} 
\frac{f(\bbn)}{g(\bbn)} &\textrm{if } g(\bbn)\neq 0,\\
0 &\textrm{if } g(\bbn)= 0.
\end{array}\right.
$$
Thus for $g(\bbn)\neq 0$, we have $|d(\bbn)|\leq M(1+\nm \bbn\nm)^{\tt m}$, and this also holds trivially when $g(\bbn)=0$, since the left-hand side is $0$. Thus $d\in \calS'(\mZ^{\tt d})$. Moreover, for $g(\bbn)\neq 0$, we have $d(\bbn)g(\bbn)=f(\bbn)$, and 
when $g(\bbn)=0$, the inequality $ |f(\bbn)|\leq M(1+\nm \bbn\nm )^{\tt m} |g(\bbn)|$ yields $f(\bbn)=0$ too, showing that $d(\bbn)g(\bbn)=d(\bbn)0=0=f(\bbn)$. Hence $dg=f$, as wanted. 

\noindent (`Only if' part:) Suppose that $d\in \calS'(\mZ^{\tt d})$ is such that $dg=f$. Since $d\in \calS'(\mZ^{\tt d})$, there exist $M>0$ and a nonnegative integer ${\tt m}$ such that $|d(\bbn)|\leq M(1+\nm \bbn\nm)^{\tt m}$. So $|f(\bbn)|\!\leq\! |d(\bbn)||g(\bbn)|\!\leq \!M(1\!+\!\nm \bbn\nm)^{\tt m}|g(\bbn)|$ for all $\bbn \in \mZ^{\tt d}$. 
\end{proof}

\noindent In particular,  $f$ is invertible in $\calS'(\mZ^{\tt d})$ if and only if there exists a real number $\delta>0$ and a nonnegative integer ${\tt m}$ such that for all $\bbn\in \mZ^{\tt d}$, 
$|f(\bbn)|\geq \delta (1+|\bbn|)^{-{\tt m}}$. 

\begin{proposition}
\label{prop_17_3_2023_1917}
Every finite number of elements $ f_1,\cdots, f_{\tt K}\in \calS'(\mZ^{\tt d})$ $({\tt K}\in \mN)$ have a greatest common divisor $d$. The element $d$ is given $($up to invertible elements$)$ by $d(\bbn)=\max\{|f_1(\bbn)|,\cdots, |f_{\tt K}(\bbn)|\}$ $(\bbn\in \mZ^{\tt d})$.
\end{proposition}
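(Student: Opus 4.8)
The plan is to verify directly, via the divisibility criterion of Proposition~\ref{21_3_2023_1018}, that the nonnegative real-valued function $d$ given by $d(\bbn)=\max\{|f_1(\bbn)|,\cdots,|f_{\tt K}(\bbn)|\}$ is a greatest common divisor, and then to address the ``up to invertible elements'' clause separately. First I would confirm that $d\in\calS'(\mZ^{\tt d})$: since each $f_{\tt k}$ has at most polynomial growth, say $|f_{\tt k}(\bbn)|\leq M_{\tt k}(1+\nm\bbn\nm)^{{\tt m}_{\tt k}}$, setting $M=\max_{\tt k}M_{\tt k}$ and ${\tt m}=\max_{\tt k}{\tt m}_{\tt k}$ gives $|f_{\tt k}(\bbn)|\leq M(1+\nm\bbn\nm)^{\tt m}$ for every ${\tt k}$, whence $d(\bbn)\leq M(1+\nm\bbn\nm)^{\tt m}$; as real-valued functions are in particular complex-valued, $d$ is a genuine element of the ring.

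Next I would show that $d$ is a common divisor. By the very definition of the maximum, $|f_{\tt k}(\bbn)|\leq d(\bbn)=|d(\bbn)|$ for all $\bbn$ and each ${\tt k}$, so Proposition~\ref{21_3_2023_1018} applies with $M=1$ and ${\tt m}=0$ to yield $d\mid f_{\tt k}$. (Where $d(\bbn)=0$ the maximum vanishes, forcing $f_{\tt k}(\bbn)=0$, so the inequality is the harmless $0\leq 0$.) For the ``greatest'' property, suppose $e$ is any common divisor. Proposition~\ref{21_3_2023_1018} furnishes, for each ${\tt k}$, constants $M_{\tt k}>0$ and ${\tt m}_{\tt k}$ with $|f_{\tt k}(\bbn)|\leq M_{\tt k}(1+\nm\bbn\nm)^{{\tt m}_{\tt k}}|e(\bbn)|$; taking the maximum of the $M_{\tt k}$ and of the ${\tt m}_{\tt k}$ produces a single bound valid for all ${\tt k}$, and therefore $|d(\bbn)|=d(\bbn)=\max_{\tt k}|f_{\tt k}(\bbn)|\leq M(1+\nm\bbn\nm)^{\tt m}|e(\bbn)|$. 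Another appeal to Proposition~\ref{21_3_2023_1018} gives $e\mid d$, so $d$ is a greatest common divisor.

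Finally, the ``up to invertible elements'' clause is where the argument is least automatic, because $\calS'(\mZ^{\tt d})$ is not an integral domain and mutual divisibility need not force an associate in a general ring. If $d'$ is another greatest common divisor, then $d\mid d'$ and $d'\mid d$ (each being a common divisor that the other, being greatest, must divide), and I would make the unit explicit: put $u(\bbn)=d'(\bbn)/d(\bbn)$ where $d(\bbn)\neq 0$ and $u(\bbn)=1$ otherwise. The two divisibility bounds from Proposition~\ref{21_3_2023_1018} force the zero sets of $d$ and $d'$ to coincide and show that both $|u(\bbn)|$ and $|u(\bbn)|^{-1}$ are dominated by polynomials in $\nm\bbn\nm$, so $u$ and $u^{-1}$ lie in $\calS'(\mZ^{\tt d})$; hence $u$ is invertible (this is precisely the invertibility criterion recorded immediately after Proposition~\ref{21_3_2023_1018}), and $d'=ud$. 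I expect this last step, checking that mutual divisibility genuinely produces a ring unit despite the presence of zero divisors, to be the only real subtlety; the existence and minimality of $d$ are routine consequences of the growth bookkeeping permitted by Proposition~\ref{21_3_2023_1018}.
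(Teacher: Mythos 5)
Your proof is correct and follows essentially the same route as the paper: define $d$ as the pointwise maximum of the $|f_{\tt k}|$, check $d\in\calS'(\mZ^{\tt d})$, apply Proposition~\ref{21_3_2023_1018} to get that $d$ divides each $f_{\tt k}$, and combine the growth constants to show any common divisor divides $d$. Your final paragraph, making explicit that mutual divisibility yields a genuine unit $u$ with $d'=ud$ despite the presence of zero divisors, goes beyond what the paper records and is a correct and worthwhile addition.
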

\begin{proof} Let $d(\bbn)=\max\{|f_1(\bbn)|,\cdots, |f_{\tt K}(\bbn)|\}$ for all $\bbn\in \mZ^{\tt d}$. 
Clearly $d\in \calS'(\mZ^{\tt d})$. As $|f_{\tt k}(\bbn)|\leq |d(\bbn)|$ for all $\bbn \in \mZ^{\tt d}$ and all ${\tt k}\in  \{1,\cdots, {\tt K}\}$, Proposition~\ref{21_3_2023_1018} implies that $d$ is a common divisor of $f_1,\cdots, f_{\tt K}$.  

If $\widetilde{d}\in \calS'(\mZ^{\tt d})$ is a common divisor of $f_1,\cdots, f_{\tt K}$, then by Proposition~\ref{21_3_2023_1018} again, there exist real $M_{\tt k}>0$ and positive integers ${\tt m}_{\tt k}$, for each ${\tt k}\in \{1,\cdots, {\tt K}\}$, such that 
$|f_{\tt k}(\bbn)|\leq M_{\tt k} (1+\nm \bbn \nm)^{{\tt m}_{\tt k}}|\widetilde{d}(\bbn)|$ for all ${\bbn}\in \mZ^{\tt d}$. Setting $M:=\max\{M_1,\cdots, M_{\tt K}\}$ and $m:=\max\{{\tt m}_1,\cdots, {\tt m}_{\tt K}\}$, we get $|d(\bbn)|\leq M(1+\nm \bbn\nm)^{\tt m}|\widetilde{d}(\bbn)|$ for all $\bbn \in \mZ^{\tt d}$. By Proposition~\ref{21_3_2023_1018}, $\widetilde{d}$ divides $d$ in $\calS'(\mZ^{\tt d})$.
\end{proof}

\begin{proposition}
\label{21_3_2023_1937}
Let $\langle f_1,\cdots, f_{\tt K}\rangle$ denote the ideal generated ${\tt K}\in \mN$ elements $ f_1,\cdots, f_{\tt K}\in \calS'(\mZ^{\tt d})$. Then 
$f\in \langle f_1,\cdots, f_{\tt K}\rangle $ if and only if there exists an $M>0$ and a nonnegative integer ${\tt m}$ such that 
$$
|f(\bbn)|\leq M(1+\nm \bbn\nm)^{\tt m} 
\sum_{{\tt k}=1}^{\tt K}|f_{\tt k}(\bbn)| \textrm{ for all }\bbn \in \mZ^{\tt d}.
$$
\end{proposition}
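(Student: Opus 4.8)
The plan is to prove the two implications directly, the only genuine work lying in the ``if'' direction. Throughout I would exploit the elementary pointwise comparison between a maximum and a sum of nonnegative reals: writing $d(\bbn):=\max\{|f_1(\bbn)|,\cdots,|f_{\tt K}(\bbn)|\}$ (which, by Proposition~\ref{prop_17_3_2023_1917}, is up to units the greatest common divisor of $f_1,\cdots,f_{\tt K}$), one has
\[
d(\bbn)\;\leq\;\sum_{{\tt k}=1}^{\tt K}|f_{\tt k}(\bbn)|\;\leq\;{\tt K}\,d(\bbn)\qquad(\bbn\in\mZ^{\tt d}).
\]
Consequently the inequality in the statement is, up to replacing the constant $M$ by ${\tt K}M$ or vice versa, equivalent to $|f(\bbn)|\leq M(1+\nm\bbn\nm)^{\tt m}\,d(\bbn)$ for all $\bbn$; that is, by Proposition~\ref{21_3_2023_1018}, equivalent to $d\mid f$.

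For the ``only if'' direction, suppose $f=a_1f_1+\cdots+a_{\tt K}f_{\tt K}$ with each $a_{\tt k}\in\calS'(\mZ^{\tt d})$. Each factor obeys a growth bound $|a_{\tt k}(\bbn)|\leq M_{\tt k}(1+\nm\bbn\nm)^{{\tt m}_{\tt k}}$, so taking $M:=\max_{\tt k}M_{\tt k}$ and ${\tt m}:=\max_{\tt k}{\tt m}_{\tt k}$ and applying the triangle inequality to $f(\bbn)=\sum_{\tt k}a_{\tt k}(\bbn)f_{\tt k}(\bbn)$ yields the desired bound at once. This step is routine.

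The substance is the ``if'' direction, where I must produce coefficients $b_{\tt k}\in\calS'(\mZ^{\tt d})$ with $f=\sum_{\tt k}b_{\tt k}f_{\tt k}$. The idea is to route the entire value of $f$ at each index through a single dominant generator: for each $\bbn$ fix an index $j(\bbn)$ attaining the maximum $|f_{j(\bbn)}(\bbn)|=d(\bbn)$, and set $b_{\tt k}(\bbn):=f(\bbn)/f_{j(\bbn)}(\bbn)$ if ${\tt k}=j(\bbn)$ and $f_{j(\bbn)}(\bbn)\neq0$, and $b_{\tt k}(\bbn):=0$ otherwise. Then $\sum_{\tt k}b_{\tt k}(\bbn)f_{\tt k}(\bbn)=f(\bbn)$ whenever $d(\bbn)\neq0$, while if $d(\bbn)=0$ the hypothesized inequality forces $f(\bbn)=0$ and the identity holds trivially.

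The main obstacle is checking that the $b_{\tt k}$ actually lie in $\calS'(\mZ^{\tt d})$, since dividing by $f_{j(\bbn)}(\bbn)$ threatens to blow up wherever the generators are small; this is precisely what the hypothesis controls. For $d(\bbn)\neq0$, the lower comparison $d(\bbn)\geq\frac{1}{\tt K}\sum_{\tt k}|f_{\tt k}(\bbn)|$ together with the assumed bound gives
\[
|b_{\tt k}(\bbn)|=\frac{|f(\bbn)|}{d(\bbn)}\leq\frac{M(1+\nm\bbn\nm)^{\tt m}\sum_{\tt k}|f_{\tt k}(\bbn)|}{d(\bbn)}\leq {\tt K}\,M\,(1+\nm\bbn\nm)^{\tt m},
\]
so each $b_{\tt k}$ has at most polynomial growth and hence belongs to $\calS'(\mZ^{\tt d})$, whence $f\in\langle f_1,\cdots,f_{\tt K}\rangle$. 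The same construction applied with $f$ replaced by $d$ shows $d\in\langle f_1,\cdots,f_{\tt K}\rangle$, so that in fact $\langle f_1,\cdots,f_{\tt K}\rangle=\langle d\rangle$, recovering the assertion that finitely generated ideals are principal as a byproduct.
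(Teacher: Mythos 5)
Your proof is correct, and the ``only if'' direction coincides with the paper's. In the ``if'' direction, however, you use a genuinely different decomposition: you route the whole value $f(\bbn)$ through a single generator attaining the pointwise maximum $d(\bbn)=\max_{\tt k}|f_{\tt k}(\bbn)|$, setting $b_{j(\bbn)}(\bbn)=f(\bbn)/f_{j(\bbn)}(\bbn)$ and all other coefficients to zero, and then bound $|f(\bbn)|/d(\bbn)$ via the comparison $\sum_{\tt k}|f_{\tt k}(\bbn)|\leq {\tt K}\,d(\bbn)$. The paper instead spreads $f(\bbn)$ over all generators with the ``least--squares'' weights $g_{\tt k}(\bbn)=\overline{f_{\tt k}(\bbn)}\,f(\bbn)/\sum_{\tt j}|f_{\tt j}(\bbn)|^2$ and controls their growth by Cauchy--Schwarz. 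Both constructions are legitimate because the index set $\mZ^{\tt d}$ is discrete, so the pointwise choice of $j(\bbn)$ causes no regularity issues; your version is slightly more elementary (no Cauchy--Schwarz, only the max-versus-sum comparison), at the cost of an arbitrary selection of the argmax, while the paper's formula is canonical and choice-free. Your closing observation that the same construction applied to $d$ yields $\langle f_1,\cdots,f_{\tt K}\rangle=\langle d\rangle$ matches the remark the paper makes immediately after the proposition, where principality of finitely generated ideals is deduced by combining Propositions~\ref{prop_17_3_2023_1917} and~\ref{21_3_2023_1937}.
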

\begin{proof} (`If' part:) For ${\tt k}\in \{1,\cdots, {\tt N}\}$, define 
$g_{\tt k}:\mZ^{\tt d}\rightarrow \mC$ by 
$$
g_{\tt k}(\bbn)=\left\{ \begin{array}{cl}
 \frac{\overline{f_{\tt k}(\bbn)}}{\sum\limits_{\tt j=1}^{\tt K} |f_{\tt j}(\bbn)|^2} f(\bbn)
&\textrm{if } {\scaleobj{0.81}{\sum\limits_{\tt j=1}^{\tt K}}} |f_{\tt j}(\bbn)|^2\neq 0,\\
0 &\textrm{if } {\scaleobj{0.81}{\sum\limits_{\tt j=1}^{\tt K}}} |f_{\tt j}(\bbn)|^2=0.
\end{array}
\right.
$$
If $Q(\bbn):= {\scaleobj{0.81}{\sum\limits_{\tt j=1}^{\tt K} }} |f_{\tt j}(\bbn)|^2\neq 0$, then 
(by Cauchy-Schwarz in the last step), 
$$
\begin{array}{rcl}
|g_{\tt k}(\bbn)|= \frac{|\overline{f_{\tt k}(\bbn)}|}{\sum\limits_{\tt j=1}^{\tt K} |f_{\tt j}(\bbn)|^2} |f(\bbn)|
\!\!\!&\leq &\!\!\!
\frac{\sum\limits_{\tt j=1}^{\tt K} |f_{\tt j}(\bbn)|}{\sum\limits_{\tt j=1}^{\tt K} |f_{\tt j}(\bbn)|^2} M(1\!+\!\nm \bbn\nm)^{\tt m} 
{\scaleobj{0.81}{\sum\limits_{{\tt k}=1}^{\tt K}}}|f_{\tt k}(\bbn)|\\
\!\!\!&\leq &\!\!\! 
\frac{(\sum\limits_{\tt j=1}^{\tt K} |f_{\tt j}(\bbn)|)^2}{\sum\limits_{\tt j=1}^{\tt K} |f_{\tt j}(\bbn)|^2} M(1\!+\!\nm \bbn\nm)^{\tt m} \leq   K M (1\!+\!\nm \bbn\nm)^{\tt m}.
\end{array}
$$
So $g_1,\cdots, g_{\tt K}\in \calS'(\mZ^{\tt d})$. We claim that 
$f_1g_1+\cdots +f_{\tt K} g_{\tt K}=f$. The evaluation of the left-hand side at an  $\bbn\in \mZ^{\tt d}$ such that $ Q(\bbn)\neq 0$ is easily seen to be $f(\bbn)$ by the definition of $g_1,\cdots, g_{\tt K}$. On the other hand, if $Q(\bbn)=0$, then each $f_{\tt k}(\bbn)=0$, and by the given inequality in the statement of the proposition, so is $f(\bbn)=0$. Thus in this case the evaluations at $\bbn$ of both  sides of $f_1g_1+\cdots +f_{\tt K} g_{\tt K}=f$ are zeroes. 

\noindent (`Only if' part:) If $f\in \langle f_1,\cdots, f_{\tt K}\rangle$, then there exist $g_1,\cdots, g_{\tt K}\in \calS'(\mZ^{\tt d})$ such that $f=f_1g_1+\cdots +f_{\tt K} g_{\tt K}$. Let  $M_{\tt k}>0$ and  ${\tt m}_{\tt k}\in \mN\cup\{0\}$, ${\tt k}\in \{1,\cdots, {\tt K}\}$, be such that $|g_{\tt k}(\bbn)|\leq M_{\tt k}(1+\nm \bbn\nm)^{{\tt m}_{\tt k}}$ ($\bbn \in \mZ^{\tt d}$). Then with $M:=\max\{M_1,\cdots, M_{\tt K}\}$ and ${\tt m}:=\max\{{\tt m}_1,\cdots, {\tt m}_{\tt K}\}$, we get 
$$
|f(\bbn)|\leq {\scaleobj{0.81}{\sum_{{\tt k}=1}^{\tt K}}}|f_{\tt k}(\bbn)||g_{\tt k}(\bbn)|
\leq M (1+\nm \bbn\nm)^{\tt m} {\scaleobj{0.81}{\sum_{{\tt k}=1}^{\tt K}}}|f_{\tt k}(\bbn)|.
\eqno\qedhere
$$
\end{proof}

\noindent It follows from Propositions~\ref{prop_17_3_2023_1917} and \ref{21_3_2023_1937} that every finite generated ideal is principal. (Indeed, $\langle f_1,\cdots, f_{\tt K}\rangle =\langle d\rangle$: That $f_{\tt k}\subset \langle d\rangle$ for each ${\tt k}$ is obvious as $d$ divides $f_{\tt k}$, in turn showing  $\langle f_1,\cdots, f_{\tt K}\rangle \subset \langle d\rangle$. For the reverse inclusion, $|d(\bbn)|\!=\!\max\{|f_1(\bbn)|,\cdots, |f_{\tt K}(\bbn)|\}\!\leq\! \sum_{{\tt k}=1}^{\tt K} |f_{\tt k}(\bbn)|$ ($\bbn\!\in \!\mZ^{\tt d}$), and so  by Proposition~\ref{21_3_2023_1937},  $d\in \langle f_1,\cdots, f_{\tt K}\rangle$.  Thus we get  $\langle d\rangle \subset \langle f_1,\cdots, f_{\tt K}\rangle$.)

\vspace{-0.3cm}

\section{Maximal ideals}
\label{section_3}

\begin{definition}
An ideal $\mathfrak{i}$ of $\calS'(\mZ^{\tt d})$ is {\em fixed} if there exists an $\bbk\in \mZ^{\tt d}$ such that for all $f\in \mathfrak{i}$, $f(\bbk)=0$.
\end{definition}

\begin{theorem}
For $\bbk\in \mZ^{\tt d}$, let $\mathfrak{m}_{\bbk}:=\{f\in \calS'(\mZ^{\tt d}): f(\bbk)=0\}$. 
Then $\mathfrak{m}_{\bbk}$ is a fixed maximal ideal of $\calS'(\mZ^{\tt d})$. Every fixed maximal ideal of $\calS'(\mZ^{\tt d})$ is equal to $\mathfrak{m}_{\bbk}$ for some $\bbk\in \mZ^{\tt d}$. 
\end{theorem}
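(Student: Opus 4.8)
The plan is to identify $\mathfrak{m}_{\bbk}$ as the kernel of an evaluation homomorphism, deduce maximality via a quotient-is-a-field argument, and then obtain the second assertion almost immediately from the definition of a fixed ideal combined with maximality.

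First I would verify that $\mathfrak{m}_{\bbk}$ is an ideal by observing that the evaluation map $\mathrm{ev}_{\bbk}:\calS'(\mZ^{\tt d})\rightarrow \mC$, $f\mapsto f(\bbk)$, is a ring homomorphism (since the operations on $\calS'(\mZ^{\tt d})$ are pointwise) whose kernel is precisely $\mathfrak{m}_{\bbk}$. Because the constant functions lie in $\calS'(\mZ^{\tt d})$ and $\mathrm{ev}_{\bbk}$ sends the constant function with value $c$ to $c$, the map is surjective, so by the first isomorphism theorem $\calS'(\mZ^{\tt d})/\mathfrak{m}_{\bbk}\cong \mC$. As $\mC$ is a field, $\mathfrak{m}_{\bbk}$ is a maximal ideal; and it is fixed by construction, with witnessing index $\bbk$.

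For the second claim, let $\mathfrak{i}$ be any fixed maximal ideal, with a common vanishing index $\bbk\in \mZ^{\tt d}$, so that $f(\bbk)=0$ for every $f\in \mathfrak{i}$. This says exactly that $\mathfrak{i}\subseteq \mathfrak{m}_{\bbk}$. Since $\mathfrak{m}_{\bbk}$ is a proper ideal (it does not contain the unit $1_{\mZ^{\tt d}}$, whose value at $\bbk$ is $1$), the maximality of $\mathfrak{i}$ forces the inclusion to be an equality, giving $\mathfrak{i}=\mathfrak{m}_{\bbk}$.

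There is essentially no serious obstacle in this argument; the only two points requiring any care are the surjectivity of the evaluation map (to ensure the quotient is genuinely a field, and not merely an integral domain, so that maximality rather than primality follows) and the remark that $\mathfrak{m}_{\bbk}$ is proper, which is what prevents the inclusion $\mathfrak{i}\subseteq \mathfrak{m}_{\bbk}$ from being strict.
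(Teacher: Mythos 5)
Your proof is correct. It diverges from the paper only in how maximality of $\mathfrak{m}_{\bbk}$ is established: you realise $\mathfrak{m}_{\bbk}$ as the kernel of the surjective evaluation homomorphism $\mathrm{ev}_{\bbk}:\calS'(\mZ^{\tt d})\rightarrow \mC$ and invoke the first isomorphism theorem together with the fact that $\mC$ is a field, whereas the paper argues directly: given an ideal $\mathfrak{i}$ properly containing $\mathfrak{m}_{\bbk}$ and $f\in\mathfrak{i}$ with $f(\bbk)\neq 0$, it writes $1_{\mZ^{\tt d}}=\bigl(1_{\mZ^{\tt d}}-\tfrac{f}{f(\bbk)}\bigr)+\tfrac{f}{f(\bbk)}$, the first summand lying in $\mathfrak{m}_{\bbk}\subset\mathfrak{i}$ and the second in $\mathfrak{i}$, so $\mathfrak{i}=\calS'(\mZ^{\tt d})$. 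The two arguments are essentially the same computation packaged differently: your route is slicker and makes the residue field visible, at the cost of needing the (easy but worth stating, as you do) observation that constants lie in $\calS'(\mZ^{\tt d})$ so that $\mathrm{ev}_{\bbk}$ is onto; the paper's route is self-contained and produces the unit explicitly. Your treatment of the second assertion --- a fixed maximal ideal $\mathfrak{i}$ with witnessing index $\bbk$ satisfies $\mathfrak{i}\subseteq\mathfrak{m}_{\bbk}\subsetneq\calS'(\mZ^{\tt d})$, whence equality by maximality of $\mathfrak{i}$ --- is exactly the paper's.
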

\begin{proof} The fixedness of $\mathfrak{m}_{\bbk}$ is clear. We now show that maximality. As $1_{\mZ^{\tt d}}\in \calS'(\mZ^{\tt d})\setminus \mathfrak{m}_{\bbk}$, $\mathfrak{m}_{\bbk} \subsetneq \calS'(\mZ^{\tt d})$. Let $\mathfrak{i}$ be an ideal such that $\mathfrak{m}_{\bbk}\subsetneq \mathfrak{i}$. Suppose that $f\in \mathfrak{i}\setminus \mathfrak{m}_{\bbk}$. 
Then $f(\bbk)\neq 0$. Define $g\in \calS'(\mZ^{\tt d})$ by 
$g=1_{\mZ^{\tt d}}-\frac{f}{f(\bbk)} $. As $g(\bbk)=0$, we have $g\in \mathfrak{m}_{\bbk}\subset \mathfrak{i}$. Also, $\frac{f}{f(\bbk)} \in \mathfrak{i}$. Thus 
$1_{\mZ^{\tt d}} =g+\frac{f}{f(\bbk)} \in  \mathfrak{i}$, i.e., 
$\mathfrak{i}=\calS'(\mZ^{\tt d})$. 

Next, let $\mathfrak{m}$ be a fixed maximal ideal of $\calS'(\mZ^{\tt d})$. Since $\mathfrak{m}$ is fixed, there exists a $\bbk\in \mZ^{\tt d}$ such that 
$\mathfrak{m}\subset \mathfrak{m}_{\bbk}\subsetneq \calS'(\mZ^{\tt d})$. By the maximality of $\mathfrak{m}$, we conclude that $\mathfrak{m}= \mathfrak{m}_{\bbk}$.
\end{proof}

\begin{example}[Non-fixed maximal ideals]
Let $({\tt k}_{\tt j})_{{\tt j}\in \mN}$ be any subsequence of the sequence of natural numbers. 
Set $\bbk_{\tt j}=({\tt k}_{\tt j},\cdots, {\tt k}_{\tt j})\in \mZ^{\tt d}$. 
Define $
\mathfrak{i}:=\{ f\in \calS'(\mZ^{\tt d}):\lim_{{\tt j}\rightarrow \infty} e^{{\tt k}_{\tt j}} f(\bbk_{\tt j})=0\}.
$ 
 Then $\mathfrak{i}$ is an ideal of $\calS'(\mZ^{\tt d})$. (It is clear that if $f,g\in \mathfrak{i}$, then $f+g\in \mathfrak{i}$. If $f\in \mathfrak{i}$ and $g\in \calS'(\mZ^{\tt d})$, then there exist a real $M>0$ and an ${\tt m}\in \mN\cup\{0\}$ such that 
$|g(\bbn)|\leq M(1+\nm \bbn\nm)^{\tt m}$ for all $\bbn\in \mZ^{\tt d}$, 
 and so 
$$
 |(f g)(\bbk_{\tt j})|
\leq 
|f(\bbk_{\tt j})| M(1+ {\tt d} {\tt k}_{\tt j})^{\tt m}
=
e^{{\tt k}_{\tt j}}|f(\bbk_{\tt j})| e^{-{\tt k}_{\tt j}} M(1+ {\tt d} {\tt k}_{\tt j})^{\tt m}
\stackrel{{\tt j}\rightarrow \infty}{\longrightarrow} 0,
$$ 
showing that $f g\in \calS'(\mZ^{\tt d})$.) Moreover, $\mathfrak{i}\neq \calS'(\mZ^{\tt d})$ since $1_{\mZ^{\tt d}} \not\in \mathfrak{i}$: $e^{{\tt k}_{\tt j}} |1_{\mZ^{\tt d}}(\bbk_{\tt j})|
= e^{{\tt k}_{\tt j}} 1> 1$ for all $n\in \mN$. 
Hence there exists a maximal ideal $\mathfrak{m}$ in $\calS'(\mZ^{\tt d})$ such that $\mathfrak{i}\subset \mathfrak{m}$. We note that for each $\bbk\in \mZ^{\tt d}$, $\mathfrak{m}\neq \mathfrak{m}_{\bbk}$: Define $1_{\{\bbk\}}:\mZ^{\tt d}\rightarrow \mC$  by $1_{\{\bbk\}}(\bbn)=0$ for all $\bbn\neq \bbk$ and $1_{\{\bbk\}}(\bbk)=1$. Then $1_{\{\bbk\}}\in \mathfrak{i} \subset \mathfrak{m}$, but $1_{\{\bbk\}}\not\in \mathfrak{m}_{\bbk}$ as $1_{\{\bbk\}}(\bbk)=1\neq 0$. 
\hfill$\Diamond$
\end{example}

\vspace{-0.45cm}

\section{Prime ideals}
\label{section_4}

\vspace{-0.15cm}

\subsection{Finitely generated proper prime ideals}

\begin{theorem}
\label{19_3_2023_2101}
Let $\mathfrak{p}$ be a finitely generated proper 
prime ideal of ${\mathcal{S}}'(\mZ^{\tt d})$.  
Then there exists an $\bbn_*\in \mZ^{\tt d}$ such that 
 $$
\mathfrak p=\{f\in \calS'(\mZ^{\tt d}): f(\bbn_*)=0\}.
$$  
\end{theorem}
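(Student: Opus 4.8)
The plan is to reduce to the principal case and then exploit primality through a square-root trick. By Proposition~\ref{prop_17_3_2023_1917}, Proposition~\ref{21_3_2023_1937}, and the remark following them, the finitely generated ideal $\mathfrak p$ is principal, and if $f_1,\cdots,f_{\tt K}$ generate it I may take its generator to be the greatest common divisor $d(\bbn)=\max\{|f_1(\bbn)|,\cdots,|f_{\tt K}(\bbn)|\}$, a nonnegative real-valued element of $\calS'(\mZ^{\tt d})$, so that $\mathfrak p=\langle d\rangle$. Since $\mathfrak p$ is proper, $d$ is not invertible, so by the invertibility criterion following Proposition~\ref{21_3_2023_1018}, $d$ is \emph{not} bounded below by any reciprocal of a polynomial.

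The key step, and the one I expect to be the main obstacle, is to extract a polynomial lower bound on $d$ at every index where it is nonzero. Because $d\geq 0$ has at most polynomial growth, its pointwise square root $h:=\sqrt{d}$ again has at most polynomial growth (bounding $(1+\nm\bbn\nm)^{{\tt m}/2}$ by $(1+\nm\bbn\nm)^{\tt m}$), so $h\in\calS'(\mZ^{\tt d})$. Now $h\cdot h=d\in\mathfrak p$, and since $\mathfrak p$ is prime, $h\in\mathfrak p=\langle d\rangle$. By Proposition~\ref{21_3_2023_1018} there are $M>0$ and a nonnegative integer ${\tt m}$ with $h(\bbn)\leq M(1+\nm\bbn\nm)^{\tt m}\,d(\bbn)=M(1+\nm\bbn\nm)^{\tt m}\,h(\bbn)^2$ for all $\bbn$. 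Dividing by $h(\bbn)$ at each $\bbn$ where $d(\bbn)=h(\bbn)^2>0$ gives
\[
d(\bbn)\;\geq\;\tfrac{1}{M^2}(1+\nm\bbn\nm)^{-2{\tt m}}\qquad\textrm{whenever } d(\bbn)\neq 0.
\]
Everything else follows from this inequality.

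Next I would pin down the zero set $Z:=\{\bbn\in\mZ^{\tt d}: d(\bbn)=0\}$. It is nonempty, for if it were empty the displayed bound would hold at every $\bbn$, making $d$ invertible and contradicting properness. It has at most one element: if distinct $\bba,\bbb$ both lay in $Z$, then the idempotent $1_{\{\bba\}}$ satisfies $1_{\{\bba\}}\cdot(1_{\mZ^{\tt d}}-1_{\{\bba\}})=0\in\mathfrak p$, so by primality one of the two factors lies in $\langle d\rangle$; applying the divisibility inequality of Proposition~\ref{21_3_2023_1018} at $\bba$ (respectively at $\bbb$), where $d$ vanishes, forces the factor's value there, namely $1$, to be $0$, a contradiction. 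Hence $Z=\{\bbn_*\}$ for a unique $\bbn_*\in\mZ^{\tt d}$.

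Finally I would verify $\mathfrak p=\mathfrak m_{\bbn_*}$. The inclusion $\langle d\rangle\subseteq\mathfrak m_{\bbn_*}$ is immediate from Proposition~\ref{21_3_2023_1018}, since every element of $\langle d\rangle$ vanishes wherever $d$ does, in particular at $\bbn_*$. For the reverse inclusion, take $f\in\mathfrak m_{\bbn_*}$ and write $|f(\bbn)|\leq M'(1+\nm\bbn\nm)^{{\tt m}'}$; for $\bbn\neq\bbn_*$ the displayed lower bound yields $1/d(\bbn)\leq M^2(1+\nm\bbn\nm)^{2{\tt m}}$, whence $|f(\bbn)|\leq M'M^2(1+\nm\bbn\nm)^{{\tt m}'+2{\tt m}}\,d(\bbn)$, while at $\bbn_*$ both sides vanish. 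By Proposition~\ref{21_3_2023_1018}, $d$ divides $f$, so $f\in\langle d\rangle$. Thus $\mathfrak m_{\bbn_*}\subseteq\langle d\rangle=\mathfrak p$, giving $\mathfrak p=\mathfrak m_{\bbn_*}=\{f\in\calS'(\mZ^{\tt d}):f(\bbn_*)=0\}$, as desired.
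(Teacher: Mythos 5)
Your proof is correct and follows essentially the same route as the paper's: reduce to a principal ideal $\langle d\rangle$, use the square-root trick together with primality to extract a polynomial lower bound on $d$ off its zero set, show that the zero set is a single point $\bbn_*$, and then divide explicitly to get $\mathfrak m_{\bbn_*}\subseteq\langle d\rangle$. Your two small departures --- normalising the generator to the nonnegative gcd so that only real square roots are needed, and using the idempotent $1_{\{\bba\}}$ and its complement $1_{\mZ^{\tt d}}-1_{\{\bba\}}$ (whose product is $0\in\mathfrak p$) to rule out two zeros --- are both valid and mildly streamline the paper's Steps~1 and~2.
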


\vspace{-0.6cm}

\begin{proof} We carry out the proof in several steps.

\noindent 
{\bf Step 1. $\mathfrak{p}$ is principal. If $\mathfrak{p}\!=\!\langle d\rangle$, then $d$ has at most one zero.} As $\mathfrak{p}$ is finitely generated, it is principal. Let $d\in \calS'(\mZ^{\tt d})$ be such that $\mathfrak{p}=\langle d \rangle$. 
Let $\bbm,\bbn\in \mZ^{\tt d}$ be distinct and $d(\bbn)=0=d(\bbm)$. Define $a:\mZ^{\tt d}\rightarrow \mC$ by $a(\bbk)=1$ for all $\bbk\neq \bbn$ and $a(\bbn)=0$. 
Then $a\in \calS'(\mZ^{\tt d})$. Also, let $b:\mZ^{\tt d}\rightarrow \mC$ be defined by $b(\bbk)=d(\bbk)$ for all $\bbk\not\in\{ \bbm,\bbn\}$, $b(\bbm)=0$ and $b(\bbn)=1$. Then clearly $b\in \calS'(\mZ^{\tt d})$ too (since it matches with $d$ everywhere except at the single index $\bbn$). Now $(ab)(\bbk)=d(\bbk)$ for all $\bbk\in \mZ^{\tt d}$: If $\bbk \not\in \{\bbm,\bbn\}$ this is clear from the definitions since the left-hand side is $1\cdot d(\bbk)$,  and if $\bbk=\bbm$ or $\bbn$, then both sides are $0$. So $ab=d\in \langle d\rangle=\mathfrak{p}$. But $a\not\in \langle d\rangle$ since otherwise $a=d\tilde{a}$ for some $\tilde{a}\in \calS'(\mZ^{\tt d})$ and then $1=a(\bbm)=d(\bbm) \tilde{a}(\bbm)=0\tilde{a}(\bbm)=0$, a contradiction. Also, $b\not\in \langle d\rangle$ since otherwise $b=d\tilde{b}$ for some $\tilde{b}\in \calS'(\mZ^{\tt d})$ and then $1=b(\bbn)=d(\bbn) \tilde{b}(\bbn)=0\tilde{b}(\bbm)=0$, a contradiction. 
So neither $a$ nor $b$ belong to $\langle d\rangle =\mathfrak{p}$, contradicting the primality of $\mathfrak{p}$. 

\noindent {\bf Step 2.} $\!$Let ${\scaleobj{0.96}{\mathfrak{p}\!=\!\langle d\rangle}}$ (as in Step 1). For each ${\scaleobj{0.96}{\bbn\!\in\! \mZ^{\tt d}}}$, let ${\scaleobj{0.96}{d(\bbn)\!=\!|d(\bbn)|e^{i\theta(\bbn)}}}$ for some $\theta(\bbn)\in (-\pi,\pi]$. Define $h\in \calS'(\mZ^{\tt d})$ by ${\scaleobj{0.96}{h(\bbn)=\sqrt{|d(\bbn)|}e^{i\theta(\bbn)/2}}}$ for all $\bbn\in \mZ^{\tt d}$. 
Then $h^2=d\in \mathfrak{p}$, and as $\mathfrak{p}$ is prime, $h\in \mathfrak{p}$. 

\noindent {\bf Step 3. $d$ has exactly one zero.} We will now show that there exists an $\bbn_*\in \mZ^{\tt d}$ such that $d(\bbn_*)=0$. 
Suppose this is not true. Then by Step~1, $d(\bbn)\neq 0$ for all $\bbn\in \mZ^{\tt d}$. If $h\in \mathfrak{p}$ is as in Step~2, then there exists a $k\in \calS'(\mZ^{\tt d})$ such that $h=kd$, i.e., $\sqrt{|d(\bbn)|}e^{i\theta(\bbn)/2}=|d(\bbn)| e^{i\theta(\bbn)} k(\bbn)$, which yields $1=d(\bbn)(k(\bbn))^2$ ($\bbn\in \mZ^{\tt d}$). Thus $dk^2=1_{\mZ^{\tt d}}$, showing that $d$ is invertible in $\calS'(\mZ^{\tt d})$, contradicting the properness of the ideal $\mathfrak{p}$. 

\noindent {\bf Step 4. We now show that $\mathfrak{p}= \{f\in \calS'(\mZ^{\tt d}):f(\bbn_*)=0\}$.} That $\mathfrak{p}\subset \{f\in \calS'(\mZ^{\tt d}):f(\bbn_*)=0\}$ is clear.  Let $f\in \calS'(\mZ^{\tt d})$ be such that $f(\bbn_*)=0$. Define $g:\mZ^{\tt d}\rightarrow \mC$ by
$$
g(\bbn)=\left\{\begin{array}{cl} 
\frac{f(\bbn)}{d(\bbn)} &\textrm{if } \bbn\neq \bbn_*,\\
0 & \textrm{if } \bbn=\bbn_*.
\end{array}
\right.
$$
Then $f=dg$ (note that $f(\bbn)=d(\bbn)g(\bbn)$ for $\bbn\neq \bbn_*$ follows from the definition of $g$, and $f(\bbn_*)=d(\bbn_*) g(\bbn_*)$ too since both sides are $0$). As the $h$ from Step~2 is in  $\mathfrak{p}$, there exists a $k\in \calS'(Z^{\tt d})$ such that $h=kd$, and so for all $\bbn\in \mZ^{\tt d}$, we get $\sqrt{|d(\bbn)|} e^{i\theta(\bbn)/2} =|d(\bbn)| e^{i\theta(\bbn)} k(\bbn)$, giving $1=|d(\bbn)||k(\bbn)|^2$. 
Hence for $\bbn\neq \bbn_*$, $\frac{1}{|d(\bbn)|}=|k(\bbn)|^2\leq M(1+|\bbn|)^{\tt m}
$ for some $M>0$ and a nonnegative integer ${\tt m}$.  This estimate shows that $g\in \calS'(\mZ^{\tt d})$, and hence $f=gd\in d\calS'(\mZ^{\tt d})=\langle d\rangle=\mathfrak{p}$. 
\end{proof}

\subsection{Krull dimension}

\begin{definition}
The {\em Krull dimension} of a commutative ring $R$ is the 
supremum of the lengths of chains of distinct proper prime ideals of
$R$.
\end{definition}

\noindent Recall that the Hardy algebra $H^\infty$ is the Banach algebra of bounded and holomorphic functions on the unit disc $\mD:=\{z\in \mC: |z|<1\}$, with pointwise operations and the supremum norm $\|\cdot\|_\infty$. In  \cite{vonRen77}, von~Renteln showed that the Krull dimension of 
$H^\infty$ is infinite. We adapt the idea given in \cite{vonRen77}, 
to show that the Krull dimension of $\calS'(\mZ^{\tt d})$ is infinite too. 
A key ingredient of the proof in \cite{vonRen77} was the use of a canonical factorisation of $H^\infty$ elements used to create ideals with zeroes at prescribed locations with prescribed multiplicities. Instead, we will look at the zero set in $ \mZ^{\tt d}$ for $f\in \calS'(\mZ^{\tt d})$,  and use the notion of `zero-order' introduced below. 

If $f\in \calS'(\mZ^{\tt d})$ and $\bbn=({\tt n}_1,\cdots, {\tt n}_{\tt d})\in \mZ^{\tt d}$  is  such that $f(\bbn)=0$, then we define the {\em zero-order $m(f,\bbn)$}  by 
$$
m(f,\bbn)\!=\!\min_{1\leq {\tt k}\leq {\tt d}} 
\max{\scaleobj{1.1}{\Big\{}}{\tt i} \!\in\! \mN: 
\!\!
{\scaleobj{0.9}{\begin{array}{ll} f({\tt n}_1, \cdots, {\tt n}_{{\tt k}-1}, {\tt n}_{\tt k}\!+\!{\tt j}, 
{\tt n}_{{\tt k}+1}, \cdots, {\tt n}_{\tt d})=0 \\
\textrm{ whenever } 0\leq {\tt j} \leq {\tt i}-1
\end{array}}}
\!\!{\scaleobj{1.1}{\Big\}}}.
$$
If $f({\tt n}_1, \cdots, {\tt n}_{{\tt k}-1}, {\tt n}_{\tt k}\!+\!{\tt j}, 
{\tt n}_{{\tt k}+1}, \cdots, {\tt n}_{\tt d})=0$ for all ${\tt j} \in \mN\cup \{0\}$, and all ${\tt k}\in \{1,\cdots, {\tt d}\}$,  then we set $m(f,\bbn)=\infty$. If $f(\bbn)\neq 0$, then we set $m(f,\bbn)=0$. Analogous to the multiplicity of a zero of a (not identically vanishing) holomorphic function, the zero-order satisfies the following property. 
$$ 
\begin{array}{ll}
\textrm{(P1): If }f,g\in \calS'(\mZ^{\tt d})\textrm{ and }\bbn \in \mZ^{\tt d},\\
\phantom{(P1): }\textrm{then }
m(f+g,\bbn)\geq \min\{m(f,\bbn), m(g,\bbn)\}.
\end{array}
$$
The multiplicity of a zero $\zeta$ of the pointwise product of two holomorphic functions is the sum of the multiplicities of $\zeta$ as a zero of each of the two holomorphic functions. For the zero-order, we have the following instead:
$$ 
\begin{array}{ll}
\textrm{(P2): If }f,g\in \calS'(\mZ^{\tt d})\textrm{ and }\bbn \in \mZ^{\tt d},\\
\phantom{(P1): }\textrm{then }m(f g,\bbn)\geq \max\{m(f,\bbn), m(g,\bbn)\}.
\end{array}
$$
We will use the following known result; see \cite[Theorem, \S 0.16, p.6]{GilJer60}.

\begin{proposition}
\label{thm_GilmanJerison}
If $\mathfrak{i}$ is an ideal in a ring $R,$  $M\subset R$ is a set that is closed
under multiplication, and $M\cap \mathfrak{i}=\emptyset$, then there exists an
ideal $\mathfrak{p}$ such that $\mathfrak{i}\subset \mathfrak{p}$ and $\mathfrak{p}\cap M=\emptyset,$ and $\mathfrak{p}$
maximal with respect to these properties. Moreover$,$ such an ideal $\mathfrak{p}$
is necessarily prime.
\end{proposition}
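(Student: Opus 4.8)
The plan is to produce $\mathfrak{p}$ by a Zorn's Lemma argument and then read off its primality from the maximality property, the latter being where the multiplicative closure of $M$ does the real work. First I would form the collection
$$
\Sigma:=\{\mathfrak{a}: \mathfrak{a}\text{ an ideal of }R,\ \mathfrak{i}\subset \mathfrak{a},\ \mathfrak{a}\cap M=\emptyset\},
$$
partially ordered by inclusion. This is nonempty because $\mathfrak{i}\in \Sigma$ by hypothesis. To apply Zorn's Lemma I would verify that every chain $\mathcal{C}\subset \Sigma$ has an upper bound in $\Sigma$: taking $\mathfrak{a}_0:=\bigcup_{\mathfrak{a}\in \mathcal{C}}\mathfrak{a}$, total ordering of $\mathcal{C}$ makes $\mathfrak{a}_0$ an ideal (any two of its elements lie in a common member of the chain, and absorption under multiplication by $R$ is inherited), it contains $\mathfrak{i}$, and it is disjoint from $M$ (an element of $\mathfrak{a}_0\cap M$ would already lie in some $\mathfrak{a}\in \mathcal{C}$, contradicting $\mathfrak{a}\cap M=\emptyset$). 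Hence $\mathfrak{a}_0\in \Sigma$ is an upper bound, and Zorn's Lemma yields a maximal element $\mathfrak{p}\in \Sigma$, which by construction satisfies $\mathfrak{i}\subset \mathfrak{p}$, $\mathfrak{p}\cap M=\emptyset$, and is maximal with respect to these two properties.

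It then remains to show $\mathfrak{p}$ is prime. Assuming $M\neq \emptyset$ (so that $\mathfrak{p}\subsetneq R$, since $\mathfrak{p}$ misses some element of $M$), I would suppose toward a contradiction that $ab\in \mathfrak{p}$ with $a\notin \mathfrak{p}$ and $b\notin \mathfrak{p}$. Then $\mathfrak{p}+\langle a\rangle$ and $\mathfrak{p}+\langle b\rangle$ strictly contain $\mathfrak{p}$ while still containing $\mathfrak{i}$, so by the maximality of $\mathfrak{p}$ in $\Sigma$ neither lies in $\Sigma$, and the only way to fail is to meet $M$; hence there exist $m_a,m_b\in M$ of the form $m_a=p_a+r a$ and $m_b=p_b+s b$ with $p_a,p_b\in \mathfrak{p}$ and $r,s\in R$. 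The key computation is then
$$
m_a m_b=(p_a+r a)(p_b+s b)=\underbrace{p_ap_b+p_a s b+r a\,p_b}_{\in\, \mathfrak{p}}+\,rs\,(ab),
$$
where the bracketed terms lie in $\mathfrak{p}$ because $p_a,p_b\in \mathfrak{p}$, and the final term $rs\,(ab)\in \mathfrak{p}$ precisely because $ab\in \mathfrak{p}$. Thus $m_am_b\in \mathfrak{p}$, while $m_am_b\in M$ by the multiplicative closure of $M$, contradicting $\mathfrak{p}\cap M=\emptyset$. Therefore $a\in \mathfrak{p}$ or $b\in \mathfrak{p}$, so $\mathfrak{p}$ is prime.

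I expect the Zorn's Lemma half to be entirely routine; the genuine content, and the only place demanding care, is the primality argument and in particular the identity $m_am_b\in \mathfrak{p}$. The idea is to choose representatives of $M$-elements inside $\mathfrak{p}+\langle a\rangle$ and $\mathfrak{p}+\langle b\rangle$, multiply them, and observe that every cross term either carries a factor from $\mathfrak{p}$ or collects into the single term $rs(ab)$, which lands in $\mathfrak{p}$ exactly because $ab\in\mathfrak{p}$. This is where commutativity and the presence of a unit (so that $\langle a\rangle=Ra$ and $a\in\langle a\rangle$) are used implicitly; both hold in $\calS'(\mZ^{\tt d})$. Finally, the nonemptiness of $M$ is what forces $\mathfrak{p}$ to be proper, as required for it to qualify as a prime ideal.
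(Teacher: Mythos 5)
Your proof is correct. Note that the paper does not prove this proposition at all --- it is quoted verbatim from Gillman and Jerison's \emph{Rings of continuous functions} (\S 0.16) --- and your argument (Zorn's Lemma on ideals containing $\mathfrak{i}$ and disjoint from $M$, followed by the standard computation $m_am_b=(p_a+ra)(p_b+sb)\in\mathfrak{p}\cap M$ to extract primality from maximality) is precisely the classical proof given there. Your side remarks are also the right ones: the multiplicative closure of $M$ is used only in the primality step, and the nonemptiness of $M$ (automatic in the paper's applications, where $1_{\mZ^{\tt d}}\in M_{\tt n}$) is what guarantees $\mathfrak{p}$ is proper.
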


\begin{theorem}
\label{theorem_Krull}
The Krull dimension of ${\mathcal{S}}'(\mZ^{\tt d})$ is infinite.
\end{theorem}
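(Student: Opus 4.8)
The plan is to prove infinitude of the Krull dimension by producing, for every $N\in\mN$, a chain of $N$ distinct proper prime ideals; by the definition of Krull dimension as a supremum this already forces the dimension to be infinite. The whole construction is driven by the zero-order $m(\cdot,\cdot)$ together with the properties (P1), (P2), the Gillman--Jerison Proposition above, and one realisability remark: a function taking only the values $0$ and $1$ is bounded, hence lies in $\calS'(\mZ^{\tt d})$, so any prescribed pattern of vanishing is realised by an element of the ring. Throughout write $Z(f):=\{\bbn\in\mZ^{\tt d}:f(\bbn)=0\}$, and note $Z(fg)=Z(f)\cup Z(g)$, which is the source of both (P2) and the main difficulty below.

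Fix $N$ and a sequence of distinct points $\bbk_j=({\tt c}_j,0,\dots,0)\in\mZ^{\tt d}$, $j\in\mN$, with ${\tt c}_j$ increasing so fast that the coordinate ``staircases'' $R_j^{(n)}:=\{\bbk_j+s\,\bbe_{\tt k}:0\le s<j^{n},\ 1\le {\tt k}\le {\tt d}\}$ are pairwise disjoint for the finitely many $n\le N$ under consideration. Put $\psi_n(j)=j^{n}$, so $\psi_n(j)\to\infty$ and $\psi_n=o(\psi_{n+1})$, and let $g_n\in\calS'(\mZ^{\tt d})$ be the $\{0,1\}$-valued function with $Z(g_n)=\bigcup_j R_j^{(n)}$; then $Z(g_1)\subseteq Z(g_2)\subseteq\cdots$ are nested and $m(g_n,\bbk_j)=\psi_n(j)$ for every $j$. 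By (P1) and (P2) the order-defined sets $\mathfrak a_n:=\{f\in\calS'(\mZ^{\tt d}):m(f,\bbk_j)\ge\psi_n(j)\text{ for all large }j\}$ are proper ideals (addition is (P1), absorption is (P2), and $1_{\mZ^{\tt d}}\notin\mathfrak a_n$ since $m(1_{\mZ^{\tt d}},\bbk_j)=0$). They form a descending chain $\mathfrak a_1\supseteq\mathfrak a_2\supseteq\cdots$, and the gap functions separate the levels, as $g_n\in\mathfrak a_n$ while $g_n\notin\mathfrak a_{n+1}$.

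The second step converts this descending chain of (non-prime) ideals into a chain of primes. I would build proper primes $\mathfrak p_1\supsetneq\mathfrak p_2\supsetneq\cdots$ inductively, maintaining at stage $n$ that $\mathfrak a_n\subseteq\mathfrak p_n$ and $g_i\notin\mathfrak p_n$ for $i<n$. Given $\mathfrak p_n$, I would apply the Gillman--Jerison Proposition to the ideal $\mathfrak a_{n+1}$ and to a multiplicatively closed set $T_{n+1}$ designed to contain $(\calS'(\mZ^{\tt d})\setminus\mathfrak p_n)\cup\{g_n\}$, so that the resulting prime $\mathfrak p_{n+1}$ both sits inside $\mathfrak p_n$ (it avoids $\calS'(\mZ^{\tt d})\setminus\mathfrak p_n$) and omits $g_n$; since $g_n\in\mathfrak a_n\subseteq\mathfrak p_n$, this makes $\mathfrak p_{n+1}\subsetneq\mathfrak p_n$ strict. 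The computation that legitimises this is that, because the staircases are nested, the zero set of any product of the $g_i$ equals that of the largest factor, so such products have order at most $\psi_{n-1}(j)$ and remain outside $\mathfrak a_{n+1}$.

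The hard part is exactly this primality step, and it is where the analogy with $H^\infty$ is weakest: the zero-order is not additive, only $m(fg,\bbn)\ge\max\{m(f,\bbn),m(g,\bbn)\}$ by (P2). Consequently the ideals $\mathfrak a_n$ are not themselves prime, and, worse, multiplying a function by one whose zeros lie just beyond a given staircase can lengthen a zero-run far past the maximum of the two orders. Verifying that $T_{n+1}$ really is disjoint from $\mathfrak a_{n+1}$ therefore cannot rely on the order alone but must control \emph{where} the extra factors vanish. I expect this disjointness check, together with the verification that each $\mathfrak p_{n+1}$ is a proper prime strictly inside $\mathfrak p_n$, to be the crux of the argument, with the explicitly constructed gap functions $g_n$ supplying the strictness of the inclusions.
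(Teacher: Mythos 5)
Your setup (staircase gap functions, the zero-order, Gillman--Jerison) matches the paper's in spirit, but your chain-building step has a genuine gap that your own last paragraph concedes without closing: you never verify that the multiplicatively closed set $T_{n+1}$ is disjoint from $\mathfrak{a}_{n+1}$, and as designed it need not be. Since $T_{n+1}$ must be multiplicatively closed and contain $(\calS'(\mZ^{\tt d})\setminus\mathfrak{p}_n)\cup\{g_n\}$, it contains every product $g_n u$ with $u\notin\mathfrak{p}_n$. Because $Z(g_nu)=Z(g_n)\cup Z(u)$, a $u$ whose zero set prolongs each staircase $R_j^{(n)}$ out to length $j^{\,n+1}$ pushes $m(g_nu,\bbk_j)$ up to $\psi_{n+1}(j)$ even though $m(u,\bbk_j)$ can be $0$ (take $u(\bbk_j)\neq 0$); such a $u$ lies in no $\mathfrak{a}_i$, and you have no control over whether the abstractly produced prime $\mathfrak{p}_n$ happens to contain it. If it does not, then $g_nu\in T_{n+1}\cap\mathfrak{a}_{n+1}$ and Gillman--Jerison cannot be applied. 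This difficulty is intrinsic to your top-down architecture, in which each new prime must avoid the complement of the previous one.

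The paper escapes this by building the chain bottom-up. It first produces the smallest prime $\mathfrak{p}_{{\tt N}+1}\supset\mathfrak{i}_{{\tt N}+1}$ disjoint from a multiplicative set $M_{{\tt N}+1}$, where $\mathfrak{i}_{\tt n}$ and $M_{\tt n}$ are defined by the asymptotics $m(f,2^{\tt k}\bbe_1)/{\tt k}^{\tt n}\to\infty$ and $\sup_{\tt k} m(f,2^{\tt k}\bbe_1)/{\tt k}^{\tt n}<\infty$ respectively; at each subsequent stage it applies Gillman--Jerison to the \emph{ideal} $\mathfrak{i}_{\tt n}+\mathfrak{p}_{{\tt n}+1}$ and the set $M_{\tt n}$. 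Containment $\mathfrak{p}_{{\tt n}+1}\subset\mathfrak{p}_{\tt n}$ is then automatic, because the previous prime sits inside the ideal the new prime must contain, and the only thing to check is $(\mathfrak{i}_{\tt n}+\mathfrak{p}_{{\tt n}+1})\cap M_{\tt n}=\emptyset$, which follows from (P1) together with $\mathfrak{p}_{{\tt n}+1}\cap M_{{\tt n}+1}=\emptyset$; no information about \emph{where} extra factors vanish is ever needed. Strictness comes exactly as you intend, from $f_{\tt n}\in\mathfrak{i}_{\tt n}\setminus\mathfrak{p}_{{\tt n}+1}$. If you reverse the direction of your induction and replace ``avoid the complement of $\mathfrak{p}_n$'' by ``contain $\mathfrak{a}_n+\mathfrak{p}_{n+1}$ and avoid a multiplicative set to which $g_{n-1}$ belongs'', your construction goes through.
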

\begin{proof} For ${\tt i}\in \{1,\cdots, {\tt d}\}$, let $\bbe_{\tt i}\in \mZ^{\tt d}$ be the vector all of whose components are zeroes except for the ${\tt i}^{\textrm{th}}$ one, which is defined as $1$.  For ${\tt n}\in \mN$, define $f_{\tt n}\in \calS'(\mZ^{\tt d})$ by 
\begin{eqnarray*}
\!\!\! &&\!\!\! f_{\tt n}(2^{\tt k} \bbe_1 \!+\!{\tt j} \bbe_{\tt i})=0 
\textrm{ if }{\tt k}\in \mN\cup \{0\}, \, 1\!\leq\! {\tt i}\!\leq\! {\tt d},\,
0\!\leq \!{\tt j}\! \leq\! {\tt k}^{{\tt n}+1},
 \\
\!\!\! &&\!\!\!  f_{\tt n}(\bbm) =1\textrm{ if } \bbm\not\in
\{ 2^{\tt k} \bbe_1 \!+\!{\tt j} \bbe_{\tt i}: 
{\tt k}\in \mN\cup \{0\}, \, 1\!\leq \!{\tt i}\!\leq\! {\tt d},\,0\!\leq\! {\tt j} \!\leq\! {\tt k}^{{\tt n}+1} \}.
\end{eqnarray*}
Note that $m(f_{\tt n}, 2^{\tt k} \bbe_1)\geq {\tt k}^{{\tt n}+1}$, but 
for each fixed ${\tt n}\in \mN$, there exists a ${\tt K}_{\tt n}\in \mN\cup \{0\}$ such that  the gap between the indices, 
$$
2^{{\tt k}+1}-2^{\tt k} =2^{\tt k} >{\tt k}^{{\tt n}+1}\textrm{ for all }{\tt k}>{\tt K}_{\tt n},
$$
 and so $m(f_{\tt n},2^{\tt k} \bbe_1)={\tt k}^{{\tt n}+1}$ for all  ${\tt k}>{\tt K}_{\tt n}$. 
Hence 
\begin{equation}
\label{function_family}
\lim\limits_{{\tt k}\rightarrow \infty} \frac{m(f_{\tt n},2^{\tt k} \bbe_1)}{{\tt k}^{\tt n}}=\infty
\quad \textrm{ and } \quad
\lim\limits_{{\tt k}\rightarrow \infty} \frac{m(f_{\tt n},2^{\tt k} \bbe_1)}{{\tt k}^{{\tt n}+1}}=1<\infty.
\end{equation}
Let  
$$
\mathfrak{i}_*:=\{f\in \calS'(\mZ^{\tt d}) : \exists\, {\tt k}_{0}(f) \in \mN_0 \textrm{ such that }
\forall \,{\tt k}>{\tt k}_{0}(f), \; f(2^{\tt k} \bbe_1)=0\}.
$$ 
The set $\mathfrak{i}_*$ is nonempty since $0\in \mathfrak{i}_*$. Clearly $\mathfrak{i}_*$ is closed under
addition, and $f g\in \mathfrak{i}_*$ whenever $f\in \mathfrak{i}_*$ and $g\in \calS'(\mZ^{\tt d})$. So $\mathfrak{i}_*$ is
an ideal of $\calS'(\mZ^{\tt d})$. For ${\tt n}\in \mN$, define
\begin{eqnarray*}
\mathfrak{i}_{\tt n}\!\!\!&=&\!\!\!
\Big\{ f\in \mathfrak{i}_* : \lim\limits_{{\tt k}\rightarrow \infty}
{\scaleobj{0.87}{\frac{m(f,2^{\tt k} \bbe_1)}{{\tt k}^{\tt n}}}}=\infty\Big\},\\
M_{\tt n}\!\!\!&=&\!\!\!
\Big\{ f\in \calS'(\mZ^{\tt d}): \sup\limits_{{\tt k}\in \mN}
{\scaleobj{0.87}{\frac{m(f,2^{\tt k} \bbe_1)}{{\tt k}^{\tt n}}}}<\infty\Big\}. 
\end{eqnarray*}
Clearly $f_{\tt n} \in \mathfrak{i}_{\tt n}$, and so $\mathfrak{i}_{\tt n}$ is not empty. Using
(P1), we see that if $f,g\in \mathfrak{i}_{\tt n}$, then $f+g\in \mathfrak{i}_{\tt n}$.
If $g\in\calS'(\mZ^{\tt d})$ and $f\in \mathfrak{i}_{\tt n}$, then (P2) implies that $f g\in \mathfrak{i}_{\tt n}$. Hence $\mathfrak{i}_{\tt n}$ is an ideal of $\calS'(\mZ^{\tt d})$.

The identity element $1_{\mZ^{\tt d}} \in M_{\tt n}$  for all ${\tt n}\in \mN$. If $f,g \in 
M_{\tt n}$, then it follows from (P2) that $f g\in M_{\tt n}$. Thus
$M_{\tt n}$ is a nonempty multiplicatively closed subset of $\calS'(\mZ^{\tt d})$.

It is easy to check that for all ${\tt n}\in \mN$, $\mathfrak{i}_{{\tt n}+1}\subset \mathfrak{i}_{\tt n}$  and $M_{\tt n}\subset M_{{\tt n}+1}$.  
We now prove that the inclusions are strict for each ${\tt n}\in \mN$. From
\eqref{function_family}, it follows that $f_{\tt n} \in \mathfrak{i}_{\tt n}$ but $f_{\tt n}
\not\in \mathfrak{i}_{{\tt n}+1}$. Also $f_{\tt n} \in M_{{\tt n}+1}$ and $f_{\tt n} \not\in M_{\tt n}$.

Next we show that $\mathfrak{i}_{\tt n} \cap M_{\tt n} =\emptyset$. Indeed, if $f\in
\mathfrak{i}_{\tt n} \cap M_{\tt n}$, then
$$
\infty
= \lim\limits_{{\tt k}\rightarrow \infty} {\scaleobj{0.87}{\frac{m(f,2^{\tt k} \bbe_1)}{{\tt k}^{\tt n}}}}
= \limsup\limits_{{\tt k}\rightarrow \infty}
{\scaleobj{0.87}{ \frac{m(f,2^{\tt k} \bbe_1)}{{\tt k}^{\tt n}}}}
\leq \sup\limits_{{\tt k}\in \mN} {\scaleobj{0.87}{\frac{m(f,2^{\tt k} \bbe_1)}{{\tt k}^{\tt n}}}}
< \infty,
$$
a contradiction. But $\mathfrak{i}_{\tt n} \cap M_{{\tt n}+1} \neq
\emptyset$, since $f_{\tt n} \in \mathfrak{i}_{\tt n} $ and $f_{\tt n}\in M_{{\tt n}+1}$.

We will now show that the Krull dimension of $\calS'(\mZ^{\tt d})$ is infinite by showing that 
for all ${\tt N}\in \mN$, we can construct a chain of strictly decreasing prime ideals 
 $
\mathfrak{p}_{{\tt N}+1}\subsetneq \mathfrak{p}_{\tt N}\subset \cdots \subsetneq \mathfrak{p}_2 \subsetneq \mathfrak{p}_1
$ in $\calS'(\mZ^{\tt d})$.

Fix an ${\tt N}\in \mN$. Applying Proposition~\ref{thm_GilmanJerison} by taking $\mathfrak{i}=\mathfrak{i}_{{\tt N}+1}$ and
$M=M_{{\tt N}+1}$, we obtain the existence of a prime ideal $\mathfrak{p}=\mathfrak{p}_{{\tt N}+1}$ in
$\calS'(\mZ^{\tt d})$, which satisfies $\mathfrak{i}_{{\tt N}+1}
\subset \mathfrak{p}_{{\tt N}+1}$ and $\mathfrak{p}_{{\tt N}+1} \cap M_{{\tt N}+1} =\emptyset$.

We claim the ideal $\mathfrak{i}_{\tt N}+\mathfrak{p}_{{\tt N}+1}$ of $\calS'(\mZ^{\tt d})$ satisfies ${\scaleobj{0.999}{(\mathfrak{i}_{\tt N} +\mathfrak{p}_{{\tt N}+1})
\cap M_{\tt N} = \emptyset}}$. Let $h=f+g \in \mathfrak{i}_{\tt N}+\mathfrak{p}_{{\tt N}+1}$, where $f\in
\mathfrak{i}_{\tt N}$ and $g\in \mathfrak{p}_{{\tt N}+1}$. Since $g\in \mathfrak{p}_{{\tt N}+1}$, by the construction
of ${\mathfrak{p}}_{{\tt N}+1}$ it follows that $g\not\in M_{{\tt N}+1}$. But $M_{\tt N} \subset
M_{{\tt N}+1}$, and so $g\not\in M_{\tt N}$ as well. Thus there exists a
subsequence $({\tt k}_{\tt j})_{\tt j\in \mN}$ of $({\tt k})_{{\tt k}\in \mN}$ such that
$$
\lim_{{\tt j}\rightarrow \infty} 
{\scaleobj{0.87}{\frac{m(g, 2^{{\tt k}_{\tt j}} \bbe_1)}{{\tt k}_{\tt j}^{\tt N}}}}=\infty.
$$
From (P1), we obtain 
$$
{\scaleobj{0.87}{\frac{m(h,   2^{{\tt k}_{\tt j}} \bbe_1 )}{{\tt k}_{\tt j}^{\tt N}}}}
\geq
\min \Big\{
{\scaleobj{0.87}{\frac{m(f, 2^{{\tt k}_{\tt j}} \bbe_1)}{{\tt k}_{\tt j}^{\tt N}}}},
{\scaleobj{0.87}{ \frac{m(g, 2^{{\tt k}_{\tt j}} \bbe_1)}{{\tt k}_{\tt j}^{\tt N}}}}\Big\}.
 $$
  As $f\in \mathfrak{i}_{\tt N}$,  it follows that 
$$
\sup\limits_{{\tt j}\in \mN} 
{\scaleobj{0.87}{\frac{m(h, 2^{{\tt k}_{\tt j}} \bbe_1)}{{\tt k}_{\tt j}^{\tt N}}}}
\geq 
\min \Big\{
\limsup\limits_{{\tt j} \rightarrow \infty} 
{\scaleobj{0.87}{\frac{m(f, 2^{{\tt k}_{\tt j}} \bbe_1)}{{\tt k}_{\tt j}^{\tt N}}}},
\limsup\limits_{{\tt j}\rightarrow \infty} 
{\scaleobj{0.87}{\frac{m(g, 2^{{\tt k}_{\tt j}} \bbe_1)}{{\tt k}_{\tt j}^{\tt N}}}}
\Big\}
\geq \infty.
$$
Thus $h\not\in M_{\tt N}$. Consequently, $(\mathfrak{i}_{\tt N} +\mathfrak{p}_{{\tt N}+1}) \cap M_{\tt N}=\emptyset$.

Clearly $\mathfrak{i}_{\tt N} \subset \mathfrak{i}_{\tt N}+\mathfrak{p}_{{\tt N}+1}$. 
Applying Proposition~\ref{thm_GilmanJerison} again, now taking $\mathfrak{i}=\mathfrak{i}_{\tt N}+\mathfrak{p}_{{\tt N}+1}$ and
$M=M_{\tt N}$, we obtain the existence of a prime ideal $\mathfrak{p}=\mathfrak{p}_{\tt N}$ in
$\calS'(\mZ^{\tt d})$ such that $\mathfrak{i}_{\tt N}+\mathfrak{p}_{{\tt N}+1} \subset \mathfrak{p}_{\tt N}$ and $\mathfrak{p}_{\tt N} \cap M_{\tt N} =\emptyset$. 
Thus $\mathfrak{p}_{{\tt N}+1}\subset \mathfrak{i}_{\tt N}+ \mathfrak{p}_{{\tt N}+1}\subset \mathfrak{p}_{\tt N}$.
The first inclusion is strict as $f_{\tt N}\in \mathfrak{i}_{\tt N}\subset \mathfrak{i}_{\tt N}+\mathfrak{p}_{{\tt N}+1}$. But $f_{\tt N}\not\in \mathfrak{p}_{{\tt N}+1}$ (since $f_{\tt N}\in M_{{\tt N}+1}$ and 
$\mathfrak{p}_{{\tt N}+1}\cap M_{{\tt N}+1}
=\emptyset$ by the construction of $\mathfrak{p}_{{\tt N}+1}$). Thus $\mathfrak{p}_{{\tt N}+1}\subsetneq \mathfrak{p}_{\tt N}$. 

Now consider the ideal $\mathfrak{i}:=\mathfrak{i}_{{\tt N}-1}+\mathfrak{p}_{\tt N}\supset \mathfrak{i}_{{\tt N}-1}$ of $\calS'(\mZ^{\tt d})$ and the multiplicatively closed set $M:=M_{{\tt N}-1}$ of $\calS'(\mZ^{\tt d})$. Similar to the argument given above, we show below that  $\mathfrak{i}\cap M=(\mathfrak{i}_{{\tt N}-1}+\mathfrak{p}_{\tt N})\cap M_{{\tt N}-1}=\emptyset$. 

Let $h=f+g \in \mathfrak{i}_{{\tt N}-1}+\mathfrak{p}_{{\tt N}}$, where $f\in
\mathfrak{i}_{{\tt N}-1}$ and $g\in \mathfrak{p}_{\tt N}$. Since $g\in \mathfrak{p}_{\tt N}$, by the construction
of $\mathfrak{p}_{\tt N}$, $g\not\in M_{\tt N}\supset M_{{\tt N}-1}$, and so $g\not\in M_{{\tt N}-1}$. Thus there exists a subsequence $({\tt k}_{\tt j})_{{\tt j}\in \mN}$ of $({\tt k})_{{\tt k}\in \mN}$ such that
$$
\lim\limits_{{\tt j}\rightarrow \infty} 
{\scaleobj{0.87}{\frac{m(g, 2^{{\tt k}_{\tt j}} \bbe_1)}{{\tt k}_{\tt j}^{{\tt N}-1}}}}=\infty.
$$ 
As $f\in \mathfrak{i}_{{\tt N}-1}$,  
$$
\sup\limits_{{\tt j}\in \mN} 
{\scaleobj{0.87}{\frac{m(h, 2^{{\tt k}_{\tt j}} \bbe_1)}{{\tt k}_{\tt j}^{{\tt N}-1}}}}
\geq 
\min \Big\{
\limsup\limits_{{\tt j} \rightarrow \infty} 
{\scaleobj{0.87}{\frac{m(f, 2^{{\tt k}_{\tt j}} \bbe_1)}{{\tt k}_{\tt j}^{{\tt N}-1}}}},
\limsup\limits_{{\tt j}\rightarrow \infty} 
{\scaleobj{0.87}{\frac{m(g, 2^{{\tt k}_{\tt j}} \bbe_1)}{{\tt k}_{\tt j}^{{\tt N}-1}}}}
\Big\}
\geq \infty.
$$
Thus $h\not\in M_{{\tt N}-1}$. So $(\mathfrak{i}_{{\tt N}-1} +\mathfrak{p}_{\tt N}) \cap M_{{\tt N}-1}=\emptyset$. 

By Proposition~\ref{thm_GilmanJerison}, taking $\mathfrak{i}=\mathfrak{i}_{{\tt N}-1}+\mathfrak{p}_{\tt N}\supset \mathfrak{i}_{{\tt N}-1}$  and $M=M_{{\tt N}-1}$, there exists a prime ideal $\mathfrak{p}=\mathfrak{p}_{{\tt N}-1}$ in $\calS'(\mZ^{\tt d})$ such  that 
$\mathfrak{i}_{{\tt N}-1}+\mathfrak{p}_{\tt N} \subset \mathfrak{p}_{{\tt N}-1}$ and $\mathfrak{p}_{{\tt N}-1} \cap M_{N-1} =\emptyset$. 
Thus $\mathfrak{p}_{\tt N}\subset \mathfrak{i}_{{\tt N}-1}+ \mathfrak{p}_{\tt N}\subset \mathfrak{p}_{{\tt N}-1}$, and again the first inclusion is strict (because $f_{{\tt N}-1}\in \mathfrak{i}_{{\tt N}-1}\subset \mathfrak{i}_{{\tt N}-1}+\mathfrak{p}_{\tt N}$, $f_{{\tt N}-1}\in M_{\tt N}$ and $M_{\tt N}\cap \mathfrak{p}_{\tt N}=\emptyset$). 

Proceeding in this manner, we obtain the chain of distinct prime ideals
 $
\mathfrak{p}_{{\tt N}+1} \subsetneq \mathfrak{p}_{\tt N} \subsetneq \mathfrak{p}_{{\tt N}-1} \subsetneq \cdots \subsetneq \mathfrak{p}_1.
$ in $\calS'(\mZ^{\tt d})$. 
As  ${\tt N}\in \mN$ was arbitrary, it follows that the Krull dimension of $\calS'(\mZ^{\tt d})$ is infinite.
\end{proof}

\subsection{Weak Krull dimension}
Recall the following definition from \cite{Tan}:

\begin{definition}
The {\em weak Krull dimension} of a commutative ring $R$ is the supremum of the lengths of chains of distinct proper finitely generated prime ideals of $R$.
\end{definition}

\begin{theorem}
The weak Krull dimension of ${\mathcal{S}}'(\mZ^{\tt d})$ is $1$. 
\end{theorem}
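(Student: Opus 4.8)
The plan is to reduce everything to Theorem~\ref{19_3_2023_2101}, which identifies every finitely generated proper prime ideal of $\calS'(\mZ^{\tt d})$ as $\mathfrak{m}_{\bbn_*}=\{f\in \calS'(\mZ^{\tt d}):f(\bbn_*)=0\}$ for some $\bbn_*\in \mZ^{\tt d}$, together with the fact (established by the first theorem of \S\ref{section_3}) that each such $\mathfrak{m}_{\bbn_*}$ is maximal. Granting these two inputs, the computation of the weak Krull dimension splits into a lower bound and an upper bound, under the convention that a chain consisting of a single prime ideal has length $1$.

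For the lower bound I would exhibit one finitely generated proper prime ideal, which shows that the family of chains under consideration is nonempty and already realises length $1$. Fix any $\bbk\in \mZ^{\tt d}$ and consider $g:=1_{\mZ^{\tt d}}-1_{\{\bbk\}}$, the function equal to $0$ at $\bbk$ and to $1$ elsewhere; it lies in $\calS'(\mZ^{\tt d})$ since it is bounded. I claim $\mathfrak{m}_{\bbk}=\langle g\rangle$. The inclusion $\langle g\rangle \subseteq \mathfrak{m}_{\bbk}$ is immediate, and conversely any $f$ with $f(\bbk)=0$ satisfies $|f(\bbn)|\leq |f(\bbn)|\,|g(\bbn)|$ for all $\bbn\in \mZ^{\tt d}$ (with equality when $\bbn\neq \bbk$, since there $|g(\bbn)|=1$, and trivially at $\bbk$, where both sides vanish), so Proposition~\ref{21_3_2023_1018} with $M=1$ and ${\tt m}=0$ gives that $g$ divides $f$. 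Hence $\mathfrak{m}_{\bbk}$ is principal, proper, and prime (being maximal), providing the required finitely generated proper prime ideal.

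For the upper bound I would invoke Theorem~\ref{19_3_2023_2101}: every finitely generated proper prime ideal of $\calS'(\mZ^{\tt d})$ equals some $\mathfrak{m}_{\bbn_*}$ and is therefore maximal. Since two distinct maximal ideals are never comparable under inclusion, the finitely generated proper prime ideals form an antichain, so no chain of distinct such ideals can contain more than one member. Thus every chain of distinct proper finitely generated prime ideals has length exactly $1$, and combined with the lower bound this yields that the weak Krull dimension equals $1$.

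The substantive content is entirely carried by Theorem~\ref{19_3_2023_2101}; the only genuinely new verifications are the principality of $\mathfrak{m}_{\bbk}$ (a one-line application of Proposition~\ref{21_3_2023_1018}) and the observation that maximality forces the finitely generated proper primes into an antichain. The main point to state carefully, and the only place where the value could be misread, is the length convention: because all the relevant primes are maximal there are no strict inclusions to exploit, so the answer is $1$ (a single prime) rather than anything larger, and the contrast with the infinite ordinary Krull dimension of Theorem~\ref{theorem_Krull} comes precisely from dropping the finite-generation hypothesis on the intermediate primes.
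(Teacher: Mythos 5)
Your proof is correct and follows essentially the same route as the paper: both reduce to Theorem~\ref{19_3_2023_2101} to identify every finitely generated proper prime ideal as some $\mathfrak{m}_{\bbn_*}$ and then conclude that no strict inclusion between two such ideals is possible (the paper does this directly with the test function $1_{\mZ^{\tt d}}-1_{\{\bbn_2\}}$, you via maximality, which is equivalent). Your explicit verification that $\mathfrak{m}_{\bbk}=\langle 1_{\mZ^{\tt d}}-1_{\{\bbk\}}\rangle$ is finitely generated, which the paper leaves implicit, is a worthwhile addition since it is needed for the lower bound.
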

\begin{proof} 
Let $\mathfrak p_1$ and $\mathfrak p_2$ be finitely generated proper prime ideals in ${\mathcal{S}}'(\mZ^{\tt d})$ such that $\mathfrak p_1\subset \mathfrak p_2$. 
For each ${\tt i}\in \{1,2\}$, by Proposition~\ref{prop_17_3_2023_1917}, there exists an $\bbn_{\tt i}\in \mZ^{\tt d}$ such that $\mathfrak{p}_{\tt i}=\{f\in \calS'(\mZ^{\tt d}):f(\bbn_{\tt i})=0\}$. 
But as $\mathfrak p_1\subset \mathfrak p_2$, it follows that $\bbn_1=\bbn_2$ (by considering the function which is zero at all $\bbn \in \mZ^{\tt d}\setminus \{\bbn_2\}$ and equal to $1$ at $\bbn_2$), and so $\mathfrak{p}_1=\mathfrak{p}_2$. So the weak Krull dimension of $\calS'(\mZ^{\tt d})$ is $1$.
\end{proof}

\end{document}